\newtheorem{theorem}{Theorem}[section]
\newtheorem{proposition}{Proposition}[section]
\newtheorem{remark}{Remark}[section]
\newtheorem{assumption}{Assumption}[section]
\newcommand{\I}{ ~\forall i\in \mathbb{N}_I}
\newcommand{\K}{~\forall k\in \mathbb{N}_{T+1}}
\newcommand{\Km}{~\forall k\in \mathbb{N}_{T}}
\DeclareAcronym{RMPC}{
  short = RMPC,
  long  = robust model predictive control,
}
\DeclareAcronym{SLS}{
  short = SLS,
  long  = system level synthesis,
}
\DeclareAcronym{MPC}{
  short = MPC,
  long  = model predictive control,
}
\DeclareAcronym{LTV}{
  short = LTV,
  long  = linear time-varying,
}
\DeclareAcronym{NLP}{
  short = NLP,
  long  = nonlinear program,
}
\DeclareAcronym{SQP}{
  short = SQP,
  long  = sequential quadratic programming,
}
\DeclareAcronym{QP}{
  short = QP,
  long  = quadratic program,
}
\newcommand{\ek}{e_k}
\newcommand{\sv}[2]{\left(#1,#2\right)}
\newcommand{\zv}{\sv{z_k}{v_k}}
\newcommand{\T}{^\top}
\newcommand{\B}{ \mathbb{B}_\infty}
\newcommand{\xz}{\bar{{x}}}
\newcommand{\Z}{\mathbf{z}}
\newcommand{\V}{\mathbf{v}}
\newcommand{\U}{\mathbf{u}}
\newcommand{\Y}{\mathbf{y}}
\newcommand{\X}{\mathbf{x}}
\newcommand{\W}{\mathbf{w}}
\renewcommand{\d}{d}
\newcommand{\E}{\bm{\d}}
\newcommand{\tube}{\tau}
\newcommand{\D}{\bm{\tube}}
\newcommand{\defmath}{\vcentcolon =}
\newcommand{\mathdef}{=\vcentcolon}
\newcommand{\C}{\mathbb{P}}
\newcommand{\feas}{\diamond}
\title{Robust Nonlinear Optimal Control\\ via System Level Synthesis}
\author{Antoine P. Leeman$^1$, Johannes Köhler$^1$, Andrea Zanelli$^1$, Samir Bennani$^2$,  Melanie N. Zeilinger$^1$
\thanks{This work has been supported by the European Space Agency under OSIP 4000133352, the Swiss Space Center, and the Swiss National Science Foundation under NCCR Automation (grant agreement 51NF40 180545).}
\thanks{$^1$Antoine P. Leeman, Johannes Köhler, Andrea Zanelli, and  Melanie N. Zeilinger are with the Institute for Dynamic Systems and Control,
ETH Zürich, Zürich 8053, Switzerland (email: aleeman@ethz.ch; jkoehle@ethz.ch; zanellia@ethz.ch; mzeilinger@ethz.ch)}
\thanks{$^2$Samir Bennani is with the European Space Agency, Noordwijk 2201AZ, The Netherlands (email: samir.bennani@esa.int)}
}
\begin{document}
\maketitle

\begin{abstract}
This paper addresses the problem of finite horizon constrained robust optimal control for nonlinear systems subject to norm-bounded disturbances.
To this end, the underlying uncertain nonlinear system is decomposed based on a first-order Taylor series expansion into a nominal system and an error (deviation) described as an uncertain linear time-varying system. 
This decomposition allows us to leverage system level synthesis to jointly optimize an affine error feedback, a nominal nonlinear trajectory, and, most importantly, a dynamic linearization error over-bound used to ensure robust constraint satisfaction for the nonlinear system.
The proposed approach thereby results in less conservative planning compared with state-of-the-art techniques.
We demonstrate the benefits of the proposed approach to control the rotational motion of a rigid body subject to state and input constraints.

\end{abstract}
\begin{IEEEkeywords}
NL predictive control, Nonlinear systems, Optimal control, Robust control, System level synthesis
\end{IEEEkeywords}

\section{Introduction}
Robust nonlinear optimal control represents one of the central problems, arising, e.g., in trajectory optimization or \ac{MPC}, in many safety-critical applications, involving, e.g., robotic systems, drones, spacecraft, and many others.
While this problem has been extensively studied in the literature~\cite{Gruene2017}, and rigorous constraint satisfaction properties can be derived in the presence of disturbances (see robust predictive control formulations, e.g.,~\cite{Yu2013TubeSystems, Singh2017RobustOptimization, Kohler2021ASystems, Rakovic2023}), this is commonly achieved at the cost of introducing conservativeness.

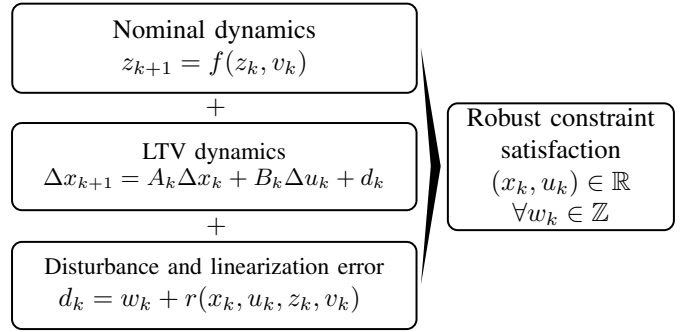
\begin{figure}[t!]
    \centering
\tikzset{every picture/.style={line width=0.75pt}} 

\begin{tikzpicture}[x=0.75pt,y=0.75pt,yscale=-1,xscale=1]

\draw  [draw opacity=0][fill={rgb, 255:red, 0; green, 0; blue, 0 }  ,fill opacity=1 ] (234.2,39.8) -- (244.2,99.8) -- (234.2,159.8) -- (239.2,99.8) -- cycle ;

\draw    (28,83) .. controls (28,80.24) and (30.24,78) .. (33,78) -- (227,78) .. controls (229.76,78) and (232,80.24) .. (232,83) -- (232,117) .. controls (232,119.76) and (229.76,122) .. (227,122) -- (33,122) .. controls (30.24,122) and (28,119.76) .. (28,117) -- cycle  ;
\draw (130,100) node  [font=\small] [align=left] {\begin{minipage}[lt]{136pt}\setlength\topsep{0pt}
\begin{center}
LTV dynamics \\$\displaystyle \Delta x_{k+1} =A_{k} \Delta x_{k} +B_{k} \Delta u_{k} +d_{k}$
\end{center}

\end{minipage}};
\draw    (28.12,22.75) .. controls (28.12,19.99) and (30.36,17.75) .. (33.12,17.75) -- (227.12,17.75) .. controls (229.88,17.75) and (232.12,19.99) .. (232.12,22.75) -- (232.12,57.75) .. controls (232.12,60.51) and (229.88,62.75) .. (227.12,62.75) -- (33.12,62.75) .. controls (30.36,62.75) and (28.12,60.51) .. (28.12,57.75) -- cycle  ;
\draw (130.12,40.25) node   [align=left] {\begin{minipage}[lt]{136.17pt}\setlength\topsep{0pt}
\begin{center}
Nominal dynamics\\$\displaystyle z_{k+1} =f( z_{k} ,v_{k})$\\
\end{center}

\end{minipage}};
\draw    (28,143) .. controls (28,140.24) and (30.24,138) .. (33,138) -- (227,138) .. controls (229.76,138) and (232,140.24) .. (232,143) -- (232,177) .. controls (232,179.76) and (229.76,182) .. (227,182) -- (33,182) .. controls (30.24,182) and (28,179.76) .. (28,177) -- cycle  ;
\draw (130,160) node   [align=left] {\begin{minipage}[lt]{136pt}\setlength\topsep{0pt}
\begin{center}
{\small Disturbance and linearization error}\\$\displaystyle d_{k} =w_{k} +r( x_{k} ,u_{k} ,z_{k} ,v_{k}) \ $
\end{center}

\end{minipage}};
\draw    (247.81,73.07) .. controls (247.81,70.31) and (250.05,68.07) .. (252.81,68.07) -- (356.81,68.07) .. controls (359.57,68.07) and (361.81,70.31) .. (361.81,73.07) -- (361.81,127.07) .. controls (361.81,129.83) and (359.57,132.07) .. (356.81,132.07) -- (252.81,132.07) .. controls (250.05,132.07) and (247.81,129.83) .. (247.81,127.07) -- cycle  ;
\draw (304.81,100.07) node   [align=left] {\begin{minipage}[lt]{75.06pt}\setlength\topsep{0pt}
\begin{center}
Robust constraint satisfaction\\$\displaystyle ( x_{k} ,u_{k}) \in \mathbb{P}$\\$\displaystyle \forall w_{k} \in \mathbb{W}$
\end{center}

\end{minipage}};
\draw (124,64.07) node [anchor=north west][inner sep=0.75pt]    {$+$};
\draw (124.33,124.07) node [anchor=north west][inner sep=0.75pt]    {$+$};

\end{tikzpicture}
    \caption{Decomposition of the uncertain nonlinear dynamics into nominal nonlinear dynamics, \ac{LTV} dynamics (Section~\ref{sec:SLS}), and an error term composed of the disturbance and dynamic linearization error (Section~\ref{sec:linerror}), facilitating linear error feedback optimization with robust constraint satisfaction for the constrained nonlinear uncertain system (Section~\ref{sec:result}).}
    \label{fig:outline}
\end{figure}

The robust control design task is traditionally divided into two main steps: the nominal trajectory optimization~\cite{MALYUTA2021282} and the offline design of a stabilizing feedback~\cite{Singh2017RobustOptimization} compensating for modeling errors or disturbances.
To ensure robust satisfaction of safety-critical state constraints, the nominal trajectory optimization is coupled with the over-approximation of the error reachable set using, e.g., tubes or funnels~\cite{Majumdar2017FunnelPlanning}.
There exists a wide range of techniques to construct corresponding over-approximations of the tubes/funnels (cf., e.g.,~\cite{Althoff2008ReachabilityLinearization, Cannon2011RobustControl, Houska2011RobustSystems, Majumdar2017FunnelPlanning, Yu2013TubeSystems, Singh2017RobustOptimization, Kohler2021ASystems,Rakovic2023}), however,
these methods can introduce significant conservatism, especially due to the choice of an offline fixed error feedback.

The conservativeness of an offline-determined error feedback policy can be addressed for linear systems by directly predicting robust control invariant polytopes~\cite{LANGSON2004125, VILLANUEVA2024111543}. Compare also~\cite{Villanueva2017RobustInequalities} for a recent approach for nonlinear systems.
Alternatively, a min-max formulation that combinatorially considers all disturbance permutations can reduce conservativeness in the linear case~\cite{Scokaert1998}.
Another systematic approach to jointly optimize a linear feedback while considering constraints is presented in~\cite{Messerer2021AnFeedback, Kim2022JointNonlinearities}, which extends approximate ellipsoidal disturbance propagation~\cite{Houska2011RobustSystems, Zanelli2021Zero-orderSets} to include optimized feedback policies.
Other methods to obtain feedback policies and (optimal) trajectories, which come without principled guarantees for robust constraint satisfaction (cf.~\cite{Neunert2016FastTracking, Gramlich2022, Li2011IterativeSystems}), are used in practice.

However, all the above mentioned methods that jointly optimize over nominal trajectories and feedback policies result in an over- or under-approximation of the true reachable sets, even for linear systems. We overcome this limitation by extending \ac{SLS}~\cite{Anderson2019, Sieber2021AControl, Sieber2021SystemMPC,Chen2021SystemApproximation, wang2019system}, or equivalently affine disturbance feedback~\cite{Goulart2006OptimizationConstraints, Goulart2006AffineConstraints, Skaf2010DesignOptimization} to constrained nonlinear systems. In particular, for linear systems, SLS allows to jointly optimize a linear error feedback policy and nominal trajectory and thereby provide an exact reachable set~\cite{Sieber2021AControl}.
There exist nonlinear \ac{SLS} extensions~\cite{Furieri2022NeuralSystems, Conger2022NonlinearSystems, Ho2020}, which however do not consider (robust) constraint satisfaction. 
\subsubsection*{Contribution}
We propose a novel approach for optimal control of nonlinear systems with robust constraint satisfaction. 
As shown in Fig.~\ref{fig:outline}, the nonlinear system is expressed as a \ac{LTV} error system constructed around a jointly optimized nominal trajectory. This formulation includes a jointly optimized error term corresponding to the linearization error. 

The presented method has the following advantages compared to the literature:
\begin{itemize}
    \item The nominal nonlinear trajectory, the affine error feedback and the dynamic linearization error over-bounds are jointly optimized for robust constraint satisfaction, leading to reduced conservativeness. This formulation leverages and extends \ac{SLS} to the nonlinear constrained case.
    \item The reachable set used for robust constraint satisfaction is tight for linear systems with affine feedback.
    \item The resulting nonlinear optimization problem does \textit{not} require linear matrix inequalities as in~\cite{Kim2022JointNonlinearities,Villanueva2017RobustInequalities}, or system-specific offline design as in~\cite{Rakovic2023,Kohler2021ASystems}, and allows for an efficient implementation (e.g., using \ac{SQP}), thus ensuring scalability.
\end{itemize}
We demonstrate the conservativeness reduction by comparing with nominal trajectory optimization, open-loop robust trajectory optimization, i.e., without controller optimization, linear techniques~\cite{Sieber2021AControl,Chen2021SystemApproximation}, i.e., with linear nominal trajectory and offline overbounding, as well as online overbounding of nonlinearity using multiplicative uncertainty (Section~\ref{sec:case_study}).

\subsubsection*{Notation}
We define the set $\mathbb{N}_T \defmath \{0,\dots,T-1\}$ where $T$ is a natural number. We denote stacked vectors or matrices by $\sv{a}{b} = [a\T~b\T]\T$. For a vector $r\in \mathbb{R}^n$, we denote its $i^\text{th}$ component by $r_i$. Let $\mathbb{R}$ be the set of real numbers, $0_{p,q}\in \mathbb{R}^{p\times q}$ be a matrix of zeros, and $0_m\in \mathbb{R}^m$ be a vector of zeros. Let $\mathcal{L}^{T,p\times q}$ denote the set of all block lower-triangular matrices with the following structure
\begin{equation}
     \mathcal{M} = \begin{bmatrix} M^{0,0} & 0_{p,q} & \dots & 0_{p,q} \\ M^{1,1} & M^{1,0} & \dots & 0_{p,q} \\ \vdots & \vdots & \ddots & \vdots \\M^{T-1,T-1} & M^{T-1,T-2} & \dots & M^{T-1,0} \end{bmatrix},
     \label{eq:mat}
\end{equation}
where $M^{i,j}\in \mathbb{R}^{p\times q}$.
The block diagonal matrix consisting of matrices $A_1,\dots,A_T$ is denoted by $\mathrm{blkdiag}(A_1,\dots,A_T)$.
The matrix $\mathcal{I}$ denotes the identity with its dimensions either inferred from the context or indicated by the subscript, i.e., $\mathcal{I}_{n_\mathrm{x}}\in\mathbb{R}^{n_\mathrm{x}\times n_\mathrm{x}}$.
Let $\B^{m}$, be the unit ball defined by $\B^{m} \defmath \{d\in\mathbb{R}^m|~ \|d\|_\infty \le 1\}$.
For a matrix $M\in \mathbb{R}^{n\times m}$, the $\infty$-norm is given by $\|M \|_\infty \defmath  \max_{d\in \B^{m}} \|Md\|_\infty$. 
For two sets $\mathbb{W}_1, \mathbb{W}_2\subseteq \mathbb{R}^n$, the Minkowski sum is defined as $\mathbb{W}_1 \oplus \mathbb{W}_2 \defmath  \{ w_1 + w_2 |~w_1 \in \mathbb{W}_1, w_2 \in \mathbb{W}_2\}$. We define $\mathbb{W}^k\defmath \underbrace{\mathbb{W} \times \dots \times \mathbb{W}}_{\text{$k$ times}}$.
For a sequence of vectors $w_k\in\mathbb{W}_k\subseteq \mathbb{R}^m$ and $k\in\mathbb{N}$, we define $\W_{0:k}:=(w_0,\dots,w_k)\in \mathbb{W}^{0:k}\defmath\mathbb{W}_0 \times \dots \times \mathbb{W}_{k}$. 

\section{Problem Formulation}
\label{sec:problem_formulation}
We consider the following robust nonlinear optimal control problem: \begin{IEEEeqnarray}{rCl}
\IEEEyesnumber\label{eq:prob_form}
\IEEEyessubnumber*
\min_{\pi(\cdot)} \quad && J_T(\xz, \pi(\cdot)),\label{eq:cost_horizon}\\
    \text{s.t.}\quad  && x_{k+1}= f(x_k,u_k) + w_k\Km,\label{eq:nonlinear_dyn}\\
    && x_0 = \xz,\label{eq:initial_conditions_prob}\\
    &&u_k = \pi_k(\X_{0:k})\Km,\label{eq:robust_feedback}\\
    &&(x_k,u_k) \in\C\K~\forall w_k \in \mathbb{W}.\label{eq:prob_form_cons}
\end{IEEEeqnarray}
The dynamics are given by~\eqref{eq:nonlinear_dyn}, with the state $x_k\in \mathbb{R}^{n_\mathrm{x}}$, the input $u_k\in \mathbb{R}^{n_\mathrm{u}}$ and the  disturbance $w_k\in \mathbb{W}\subseteq \mathbb{R}^{n_\mathrm{x}}$ at time step $k$. The initial condition is given by $\xz\in \mathbb{R}^{n_\mathrm{x}}$ in~\eqref{eq:initial_conditions_prob}.
The control input is obtained by optimizing over general causal policies $\pi_k$~\eqref{eq:robust_feedback}, with $\pi = ( \pi_0, \dots, \pi_{T}): \mathbb{R}^{(T+1)n_\mathrm{x}}\mapsto\mathbb{R}^{(T+1)n_\mathrm{u}}$, and the last input $u_T$ is kept in the problem formulation for notational convenience. We primarily focus on the robust constraint satisfaction~\eqref{eq:prob_form_cons} and, for simplicity, consider a general cost~\eqref{eq:cost_horizon} over the prediction horizon $T$ which does not depend on $w$.
\begin{assumption}
The constraint set $\C$~\eqref{eq:prob_form_cons} is a compact polytopic set with $\C \defmath  \{(x,u)|~ c_i\T(x,u)+b_i\leq 0, \forall i\in \mathbb{N}_I\}$, where $c_i \in \mathbb{R}^{n_\mathrm{x}+n_\mathrm{u}}$ and $b_i \in \mathbb{R}$.
\label{assum:2}
\end{assumption}

Problem~\eqref{eq:prob_form} is not computationally tractable because of the optimization over the general feedback policy $\pi(\cdot)$ and the robust constraint satisfaction required in~\eqref{eq:prob_form_cons}.
Consequently, the goal is to find a feasible, but potentially sub-optimal, solution to this problem, i.e., a feedback policy that ensures robust state and input constraint satisfaction.
 To this end, we define a nominal trajectory as 
 \begin{equation}
     z_{k+1}= f(z_k,v_k)\Km,~ z_0 = \xz, \label{eq:nominal_trajectory}
\end{equation}
and restrict the policy to a causal affine time-varying error feedback 
\begin{equation}
\label{eq:affine_feedback}
    \pi_k(\X_{0:k}) = v_k + \sum_{j=0}^{k-1} K^{k-1,j} \Delta x_{k-j}
\end{equation}
with $\pi_k(\X_{0:k}) : \mathbb{R}^{(k+1) n_\mathrm{x}}\mapsto \mathbb{R}^{n_\mathrm{u}}$, $v_k\in \mathbb{R}^{n_\mathrm{u}}$, $z_k\in \mathbb{R}^{n_\mathrm{x}}$, $K^{i,j}\in \mathbb{R}^{n_\mathrm{u}\times n_\mathrm{x}}$, and the errors $\Delta x_k  \defmath  x_k  - z_k $, $\Delta u_k  \defmath u_k - v_k$.
We consider the following standard assumptions.
\begin{assumption}
The nonlinear dynamics~\eqref{eq:nonlinear_dyn} $f: \mathbb{R}^{n_\mathrm{x}} \times \mathbb{R}^{n_\mathrm{u}} \mapsto \mathbb{R}^{n_\mathrm{x}}$ are twice continuously differentiable.
\label{assum:1}
\end{assumption}
Assumption~\ref{assum:1} implies local Lipschitz continuity, which enables bounding of the linearization error in Proposition~\ref{prop:bound_remainder}.
\begin{assumption}
The disturbance $w_k$ belongs to a bounded set $\mathbb{W}$ defined as
\begin{equation}
w_k \in \mathbb{W} \defmath \{Ed|~d\in \B^{n_\mathrm{w}}\} = E\B^{n_\mathrm{w}} \subseteq \mathbb{R}^{n_\mathrm{x}}\Km,
\label{eq:noise_set}
\end{equation}
with $E\in \mathbb{R}^{n_\mathrm{x}\times n_\mathrm{w}}$.
\label{assum:3}
\end{assumption}
\section{Robust Nonlinear Optimal Control via \ac{SLS}}
In this section, we derive the main result of the paper using the steps depicted in Fig.~\ref{fig:outline}, i.e., we propose a formulation to optimize over affine policies~\eqref{eq:affine_feedback} that provide robust constraint satisfaction for the nonlinear system~\eqref{eq:nonlinear_dyn}. 
We decompose the nonlinear system equivalently as the sum of a nominal nonlinear system and an \ac{LTV} error system that accounts both for the local linearization error (Section~\ref{sec:linerror}) and the additive disturbance. Using established \ac{SLS} tools for \ac{LTV} systems (Section~\ref{sec:SLS}), we parameterize the closed-loop response for this \ac{LTV} system. As a result, we obtain an optimization problem that jointly optimizes the nominal nonlinear trajectory~\eqref{eq:nominal_trajectory}, the error feedback~\eqref{eq:affine_feedback}, and the dynamic linearization error over-bounds, all while guaranteeing robust constraint satisfaction (Section~\ref{sec:result}). 
\subsection{Over-approximation of nonlinear reachable set}
\label{sec:linerror}
The goal of this section is to decompose the uncertain nonlinear system into a nominal nonlinear system and an \ac{LTV} error system subject to some disturbance. The linearization of the dynamics~\eqref{eq:nominal_trajectory} around a nominal state and input $(z,v)$ is characterized by the Jacobian matrices:
\begin{IEEEeqnarray}{rCl}
&A(z, v) \defmath  \left.\frac{\partial  f}{\partial x}   \right|_{(x,u) = (z,v)},~B(z, v) \defmath \left.\frac{\partial  f}{\partial u}   \right|_{(x,u) = (z,v)}.
\label{eq:jac}
\end{IEEEeqnarray}
Using the Lagrange form of the remainder of the Taylor series expansion, we obtain 
\begin{IEEEeqnarray}{rLl}
    f(x ,u ) + w &\stackrel{\eqref{eq:nominal_trajectory}}{=} f(z , v )
    +  A(z ,v )( x  - z ) +B(z ,v )( u  - v )  \nonumber\\
   & ~+ \underbrace{r(x ,u ,z ,v )+ w}_{\mathdef d}, \label{eq:lin_hess}
\end{IEEEeqnarray}
with the remainder $r: \mathbb{R}^{n_\mathrm{x}}\times\mathbb{R}^{n_\mathrm{u}}\times\mathbb{R}^{n_\mathrm{x}}\times\mathbb{R}^{n_\mathrm{u}} \mapsto \mathbb{R}^{n_\mathrm{x}}$, where both the disturbance $w$ and the remainder $r(x ,u ,z ,v )$ are lumped in the disturbance $\d\defmath r(x ,u ,z ,v )+ w \in\mathbb{R}^{n_\mathrm{x}}$. Hence, the overall uncertain prediction is decomposed into a nominal dynamics, LTV error dynamics, and a remainder term, as summarized in Fig.~\ref{fig:outline}.
 
To bound the remainder, we consider the (symmetric) Hessian $ H_i:  \mathbb{R}^{n_\mathrm{x}+n_\mathrm{u}}\mapsto\mathbb{R}^{(n_\mathrm{x}+n_\mathrm{u})\times(n_\mathrm{x} + n_\mathrm{u})}$  of the $i^\text{th}$ component of $f$, i.e.,
\begin{equation}
 H_i(\xi) = \left.\begin{bmatrix}
     \frac{\partial^2  f_i}{\partial x^2}  & \frac{\partial^2  f_i}{\partial x\partial u}\\
     * & \frac{\partial^2  f_i}{\partial u^2} 
 \end{bmatrix}\right|_{\sv{x}{u} = \xi},
\end{equation} 
where $\xi\in \mathbb{R}^{n_\mathrm{x}+n_\mathrm{u}}$ lies between the linearization point $(z,v)$ and the evaluation point $(x,u)$. We define the constant\footnote{Note that $\max_{\|h\|_\infty\le 1} |h\T Hh| \le \sum_i\sum_j |H_{ij}| $, with $H_{ij}$, the element on the $i^\text{th}$ row and $j^\text{th}$ column of the matrix $H$.} $\mu_i\in\mathbb{R}^{n_\mathrm{x} \times n_\mathrm{x} }$ as element-wise (worst-case) curvature, i.e., Hessian over-bounded in the constraint set $\mathbb{P}$:
\begin{equation}
    \mu \defmath \text{diag}(\mu_1, \dots, \mu_{n_\mathrm{x}}),~\mu_i \defmath  \frac{1}{2} \max_{\xi \in\C, \|h\|_\infty\le 1} |h\T H_i(\xi)h|,
    \label{eq:def_mu}
\end{equation}
and the error $e_k  \defmath  \sv{\Delta x_k }{\Delta u_k }\in \mathbb{R}^{n_\mathrm{x}+n_\mathrm{u}}$.
\begin{proposition}
\label{prop:bound_remainder}
Given Assumptions~\ref{assum:1} and~\ref{assum:2}, the remainder in~\eqref{eq:lin_hess} satisfies
\begin{equation}
    |r_i(x ,u ,z ,v )| \le  \|e \|_\infty^2\mu_i,
    \label{eq:rem_def}
\end{equation}
for any $(x ,u )\in\C, (z ,v )\in\C$.
\end{proposition}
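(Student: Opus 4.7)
The plan is to apply the Lagrange form of the second-order Taylor remainder to each component $f_i$ of $f$ about the linearization point $(z,v)$. Since Assumption~\ref{assum:1} guarantees that $f$ is twice continuously differentiable, for each $i$ there exists a point $\xi_i$ on the line segment between $(z,v)$ and $(x,u)$ such that
\begin{equation}
r_i(x,u,z,v) = \tfrac{1}{2}\, e\T H_i(\xi_i)\, e,
\end{equation}
where $e=(\Delta x,\Delta u)$. This is exactly the Taylor remainder identified in~\eqref{eq:lin_hess} after subtracting the first-order terms.

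Next, I would extract the $\infty$-norm of the error by writing $e = \|e\|_\infty\, h$ with $\|h\|_\infty \le 1$. This gives
\begin{equation}
|r_i(x,u,z,v)| = \tfrac{1}{2}\,\|e\|_\infty^2\, |h\T H_i(\xi_i)\, h| \le \|e\|_\infty^2 \cdot \tfrac{1}{2}\max_{\|h\|_\infty\le 1}|h\T H_i(\xi_i) h|.
\end{equation}
The last step is to argue that $\xi_i$ lies in the constraint set $\mathbb{P}$, so that the maximum above is upper bounded by $\mu_i$ as defined in~\eqref{eq:def_mu}. This follows from Assumption~\ref{assum:2}: $\mathbb{P}$ is polytopic, hence convex, and both endpoints $(z,v)$ and $(x,u)$ are in $\mathbb{P}$, so the entire segment (and in particular $\xi_i$) belongs to $\mathbb{P}$. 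Taking the maximum over $\xi \in \mathbb{P}$ as well therefore only enlarges the bound, which yields $|r_i| \le \|e\|_\infty^2 \mu_i$ as claimed.

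The main conceptual ingredient is the convexity of $\mathbb{P}$, which is what allows the intermediate Taylor point $\xi_i$ to inherit the containment from the two endpoints and, in turn, makes the definition of $\mu_i$ in~\eqref{eq:def_mu} a valid componentwise overbound. The only mildly non-routine calculation is the factorization of $\|e\|_\infty$ out of the quadratic form, which relies on positive homogeneity of degree two; everything else (existence of $\xi_i$, twice differentiability, compactness ensuring the maximum in~\eqref{eq:def_mu} is attained) is standard and directly supplied by Assumptions~\ref{assum:1} and~\ref{assum:2}. No additional machinery beyond the Lagrange remainder theorem is needed.
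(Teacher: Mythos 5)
Your proof is correct and follows essentially the same route as the paper's: the Lagrange form of the second-order remainder, convexity of $\C$ to place the intermediate point $\xi$ in $\C$, and then bounding by the worst case over $\xi\in\C$ and $\|h\|_\infty\le 1$ as in the definition of $\mu_i$. The only difference is that you make the degree-two homogeneity step (factoring out $\|e\|_\infty^2$) explicit, which the paper absorbs into its citation of~\eqref{eq:def_mu}.
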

\begin{proof}
Applying the definition of the Lagrange form of the remainder, for all $(x,u,z,v)$ and for all $i$, there exists a $\xi\in\C$ (using convexity) such that
\begin{align}
    |r_i(x ,u ,z ,v )| 
    & = \frac{1}{2}| e \T H_i(\xi) e |\nonumber \\
    &\le  \max_{\xi \in\C} \frac{1}{2} |e \T H_i(\xi) e |\stackrel{\eqref{eq:def_mu}}{\leq}\| e \|_\infty^2 \mu_i.&&\qedhere
\nonumber
\end{align}
\end{proof}
\begin{remark}
 The evaluation of the constants $\mu_i$ can be achieved using, e.g., sampling, interval arithmetics \cite{limon2005robust} or tailored nonlinearity bounds as in \cite{Houska2011RobustSystems}. 
\end{remark}
The bound on the Lagrange form of the remainder used in~\eqref{eq:rem_def} has two factors: the (static) offline-fixed constants $\mu_i$ --the only offline design required for the proposed method-- and the (dynamic) jointly-optimized error $\|e\|_\infty$.
Crucially, we later co-optimize a dynamic over-bound on the Lagrange remainder~\eqref{eq:rem_def} with bounds on the error $e$.
Due to Proposition~\ref{prop:bound_remainder} and Assumption~\ref{assum:3}, the combined disturbance
$\d $ from~\eqref{eq:lin_hess} satisfies
\begin{equation}
\begin{aligned}
    \d \defmath w  + r(x ,u ,z ,v )&\in E\B^{n_\mathrm{w}} \oplus \|e \|_\infty^2\mu \B^{n_\mathrm{x}}\\
    & =  [E,\|e\|_\infty^2 \mu]\B^{n_\mathrm{w}+n_\mathrm{x}}.
\end{aligned}
    \label{eq:eta_kdef}
\end{equation}
Using this expression, we can compute an outer approximation of the reachable set of the nonlinear system, using the following \ac{LTV} error system
\begin{equation}
\Delta x_{k+1} = A_k \Delta x_k + B_k \Delta u_k + \d_k \K, ~\Delta x_0 =0_{n_\mathrm{x}},
\label{eq:LTV_based_system}
\end{equation}
where $A_k \defmath A(z_k, v_k)$, $B_k \defmath B(z_k, v_k)$ are the Jacobians of the dynamics defined as in~\eqref{eq:jac}.

Similar \ac{LTV} error dynamics are used in~\cite{Althoff2008ReachabilityLinearization,Cannon2011RobustControl,Villanueva2017RobustInequalities,Houska2011RobustSystems, Kim2022JointNonlinearities} to over-approximate the reachable set. To optimize affine error feedback~\eqref{eq:affine_feedback} while ensuring robust constraint satisfaction of the nonlinear system based on this \ac{LTV} error dynamics, we next consider the robust optimal control problem for the special case of \ac{LTV} error systems as in~\eqref{eq:LTV_based_system}.
\subsection{LTV error dynamics}
\label{sec:SLS}
In this section, we consider the parametrization of linear error feedbacks for \ac{LTV} systems using established \ac{SLS} techniques~\cite{Anderson2019}, which provides the basis for addressing the nonlinear case via~\eqref{eq:lin_hess}.
In order to ultimately address the error dynamics~\eqref{eq:LTV_based_system}, we consider a given nominal trajectory $z_k,v_k$, denote the error $\tilde{x}=x-z$, $\tilde{u}=u-v$, and assume the following error dynamics analogous to~\eqref{eq:LTV_based_system} for ease of exposition:
\begin{equation}
     \tilde x_{k+1} = \tilde A_k \tilde x_k + \tilde B_k  \tilde u_k + \tilde w_k\Km,~\tilde x_0 =0_{n_\mathrm{x}},
    \label{eq:sys_dyn}
\end{equation}
with $\tilde A_k\in \mathbb{R}^{n_\mathrm{x}\times n_\mathrm{x}}$, $\tilde B_k\in \mathbb{R}^{n_\mathrm{x}\times n_\mathrm{u}}$, and 
\begin{equation}
    \tilde{w}_k\in\tilde{\mathbb{W}}_k\defmath\tilde{E}_k\B^{n_{\tilde{\mathrm{w}}}},\label{eq:noise_set_lin}
\end{equation}
with some $\tilde E_k\in \mathbb{R}^{n_\mathrm{x}\times n_{\tilde{\mathrm{w}}}}$.
The notation $\tilde{\cdot}$ differentiates~\eqref{eq:sys_dyn} from the Taylor expansion of the nonlinear system~\eqref{eq:LTV_based_system}.
The dynamics~\eqref{eq:sys_dyn} are written compactly as
\begin{equation}
\tilde \X = \mathcal{Z}\tilde{\mathcal{A}}\tilde \X + \mathcal{Z}\tilde{\mathcal{B}} \tilde \U+ \tilde\W,
\label{eq:LTV}
\end{equation}
with  $\tilde\W \defmath  \left(\tilde  w_0, \dots,\tilde w_{T-1}\right)\in \mathbb{R}^{Tn_\mathrm{x}}$, $\tilde \X \defmath  \left(\tilde x_1, \dots,\tilde x_T\right)\in \mathbb{R}^{Tn_\mathrm{x}}$, $\tilde \U \defmath  \left( \tilde u_1,\dots,\tilde u_T \right)\in \mathbb{R}^{Tn_\mathrm{u}}$, $ \tilde{\mathcal{A}} \defmath  \text{blkdiag}(\tilde A_1, \dots, \tilde A_{T-1},0_{n_\mathrm{x},n_\mathrm{x}})\in\mathcal{L}^{T,n_\mathrm{x}\times n_\mathrm{x}}$, $ \tilde{\mathcal{B}}\defmath  \text{blkdiag}(\tilde B_1, \dots, \tilde B_{T-1},0_{n_\mathrm{x},n_\mathrm{u}})\in\mathcal{L}^{T,n_\mathrm{x}\times n_\mathrm{u}}$ and the block-lower shift matrix $\mathcal{Z}\in\mathcal{L}^{T,n_\mathrm{x} \times n_\mathrm{x}}$ is given by
\begin{equation}
\mathcal{Z}\defmath
\begin{bmatrix} 
0_{n_\mathrm{x},n_\mathrm{x}} & 0_{n_\mathrm{x},n_\mathrm{x}}   & \dots     & 0_{n_\mathrm{x},n_\mathrm{x}} \\
\mathcal{I}_{n_\mathrm{x}} & 0_{n_\mathrm{x},n_\mathrm{x}}   & \dots     & 0_{n_\mathrm{x},n_\mathrm{x}} \\ 
\vdots      & \ddots        & \ddots    & \vdots \\
0_{n_\mathrm{x},n_\mathrm{x}} & \dots   & \mathcal{I}_{n_\mathrm{x}}    & 0_{n_\mathrm{x},n_\mathrm{x}} 
\end{bmatrix}.
\end{equation}
We introduce the causal linear feedback $\tilde \U = \tilde{\mathcal{K}} \tilde\X$, $\tilde{\mathcal{K}}\in \mathcal{L}^{T,n_\mathrm{u}\times n_\mathrm{x}}$ i.e., $\tilde u_k =\sum_{j=0}^{k-1} \tilde K^{k-1,j}\tilde x_{k-j} $, $\tilde K^{i,j} \in \mathbb{R}^{n_\mathrm{u} \times n_\mathrm{x}}$. Using this feedback, we can write the closed-loop error dynamics as
\begin{equation}
\tilde \X = \mathcal{Z}(\tilde{\mathcal{A}} + \tilde{\mathcal{B}} \tilde {\mathcal{K}})\tilde \X+\tilde\W,~\tilde \U = \tilde {\mathcal{K}}\tilde \X,~ \tilde x_0=0_{n_\mathrm{x}},
\label{eq:closed-loop}
\end{equation}
or equivalently as
\begin{equation}
\begin{bmatrix}\tilde \X\\ \tilde \U \end{bmatrix} = \begin{bmatrix} (\mathcal{I} - \mathcal{Z}\tilde{\mathcal{A}} - \mathcal{Z}\tilde{\mathcal{B}}\tilde {\mathcal{K}} )^{-1} \\ \tilde{\mathcal{K}}(\mathcal{I} - \mathcal{Z}\tilde{\mathcal{A}} - \mathcal{Z}\tilde{\mathcal{B}}\tilde{\mathcal{K}})^{-1} \end{bmatrix} \tilde\W = \vcentcolon \begin{bmatrix} \tilde{\Phi}_\mathrm{x} \\ \tilde{\Phi}_\mathrm{u} \end{bmatrix}\tilde \W.\\
\label{eq:sys_rep}
\end{equation}
In the following, we directly optimize the matrices
\begin{equation}
\begin{aligned}
    \tilde{\Phi}_\mathrm{x} &= \begin{bmatrix}
      \tilde{\Phi}_\mathrm{x}^{0,0} &&\\
      \vdots &\ddots&\\
      \tilde{\Phi}_\mathrm{x}^{T-1,T-1} & \cdots & \tilde{\Phi}_\mathrm{x}^{T-1,0}\\
    \end{bmatrix} \in \mathcal{L}^{T,n_\mathrm{x} \times n_\mathrm{x}},
\end{aligned}
\end{equation}
and $\tilde{\Phi}_\mathrm{u} \in \mathcal{L}^{T,n_\mathrm{u} \times n_\mathrm{x}}$, called the system responses from the disturbance to the closed-loop state and input, respectively.
The following proposition states that the closed-loop response under arbitrary linear feedback is given by all system responses in a linear subspace.
\begin{proposition}\cite[adapted from Theorem 2.1]{Anderson2019}
\label{prop:1}
Let $\tilde \W\in \tilde{\mathbb{W}}^{0:T-1}$ be an arbitrary disturbance sequence. Any $\tilde \X$, $\tilde \U$ satisfying~\eqref{eq:closed-loop}, also satisfy~\eqref{eq:sys_rep} with some $\tilde{\Phi}_x \in \mathcal{L}^{T,n_\mathrm{x} \times n_\mathrm{x}}$, $\tilde{\Phi}_u \in \mathcal{L}^{T,n_\mathrm{u} \times n_\mathrm{x}}$ lying on the affine subspace
\begin{equation}
\left.
    \begin{aligned}
        &\left[ \mathcal{I} - \mathcal{Z}\tilde{\mathcal{A}}\quad -\mathcal{Z}\tilde{\mathcal{B}}\right] \left[\begin{array}{c}\tilde{\Phi}_\mathrm{x}\\\tilde{\Phi}_\mathrm{u} \end{array}\right] = \mathcal{I}.
    \end{aligned}
    \right.    
    \label{eq:affine_map}
\end{equation}
Let $\tilde{\Phi}_\mathrm{x}$ and $\tilde{\Phi}_\mathrm{u}$ be arbitrary matrices satisfying~\eqref{eq:affine_map}. Then the corresponding $\tilde \X$ and $\tilde \U$ computed with~\eqref{eq:sys_rep} also satisfy~\eqref{eq:closed-loop} with $\tilde{\mathcal{K}} = \tilde{\Phi}_\mathrm{u}  \tilde{\Phi}_\mathrm{x}^{-1}\in \mathcal{L}^{T,n_\mathrm{u}\times n_\mathrm{x}}$.

\end{proposition}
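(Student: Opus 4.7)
The plan is to establish the two directions of Proposition~\ref{prop:1} by exploiting the block-lower-triangular structure that the shift matrix $\mathcal{Z}$ induces on all relevant operators. The crucial algebraic fact I would rely on throughout is that $\mathcal{Z}$ is strictly block lower-triangular and nilpotent ($\mathcal{Z}^T = 0$), so that for any $M \in \mathcal{L}^{T,n_\mathrm{x}\times(\cdot)}$ the matrix $\mathcal{I} - \mathcal{Z} M$ has identity blocks on its diagonal and is therefore invertible, with a block lower-triangular inverse expressible as the finite Neumann series $\sum_{k=0}^{T-1}(\mathcal{Z} M)^k$.

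For the forward direction, starting from~\eqref{eq:closed-loop} I would rearrange it to $(\mathcal{I} - \mathcal{Z}\tilde{\mathcal{A}} - \mathcal{Z}\tilde{\mathcal{B}}\tilde{\mathcal{K}})\tilde\X = \tilde\W$. Since $\tilde{\mathcal{A}}$, $\tilde{\mathcal{B}}$, $\tilde{\mathcal{K}}$ all belong to the appropriate $\mathcal{L}$-classes, the coefficient matrix is invertible by the remark above, which yields exactly the definitions of $\tilde{\Phi}_\mathrm{x}$ and $\tilde{\Phi}_\mathrm{u}$ in~\eqref{eq:sys_rep}, both lying in the claimed block lower-triangular classes. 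Left-multiplying this expression by $[\mathcal{I} - \mathcal{Z}\tilde{\mathcal{A}}\ -\mathcal{Z}\tilde{\mathcal{B}}]$ and using $\tilde{\Phi}_\mathrm{u} = \tilde{\mathcal{K}}\tilde{\Phi}_\mathrm{x}$ makes the two terms collapse into $(\mathcal{I} - \mathcal{Z}\tilde{\mathcal{A}} - \mathcal{Z}\tilde{\mathcal{B}}\tilde{\mathcal{K}})\tilde{\Phi}_\mathrm{x} = \mathcal{I}$, verifying~\eqref{eq:affine_map} directly.

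For the converse, I would take arbitrary $\tilde{\Phi}_\mathrm{x}\in\mathcal{L}^{T,n_\mathrm{x}\times n_\mathrm{x}}$, $\tilde{\Phi}_\mathrm{u}\in\mathcal{L}^{T,n_\mathrm{u}\times n_\mathrm{x}}$ satisfying~\eqref{eq:affine_map}, and define $\tilde\X \defmath \tilde{\Phi}_\mathrm{x}\tilde\W$, $\tilde\U \defmath \tilde{\Phi}_\mathrm{u}\tilde\W$. The step requiring the most care, and the real obstacle of the argument, is showing that $\tilde{\Phi}_\mathrm{x}$ is invertible, so that the feedback $\tilde{\mathcal{K}} = \tilde{\Phi}_\mathrm{u}\tilde{\Phi}_\mathrm{x}^{-1}$ is well-defined. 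For this I would rewrite~\eqref{eq:affine_map} as $\tilde{\Phi}_\mathrm{x} = \mathcal{I} + \mathcal{Z}\tilde{\mathcal{A}}\tilde{\Phi}_\mathrm{x} + \mathcal{Z}\tilde{\mathcal{B}}\tilde{\Phi}_\mathrm{u}$ and observe that $\mathcal{Z}$ times any element of $\mathcal{L}$ is strictly block lower-triangular. This forces the diagonal blocks of $\tilde{\Phi}_\mathrm{x}$ to equal the identity, making $\tilde{\Phi}_\mathrm{x}$ a block lower-triangular matrix with identity diagonal, hence invertible with inverse in the same class. Consequently $\tilde{\mathcal{K}}\in\mathcal{L}^{T,n_\mathrm{u}\times n_\mathrm{x}}$.

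To close the argument, I would verify~\eqref{eq:closed-loop} by direct substitution: the identity $\tilde\U = \tilde{\mathcal{K}}\tilde\X$ is immediate from the definition of $\tilde{\mathcal{K}}$, while plugging $\tilde\X = \tilde{\Phi}_\mathrm{x}\tilde\W$ into $\mathcal{Z}(\tilde{\mathcal{A}} + \tilde{\mathcal{B}}\tilde{\mathcal{K}})\tilde\X + \tilde\W$ and invoking~\eqref{eq:affine_map} reproduces $\tilde{\Phi}_\mathrm{x}\tilde\W = \tilde\X$. Because $\tilde\W$ enters linearly throughout, these identities hold for any disturbance realization in $\tilde{\mathbb{W}}^{0:T-1}$, completing the equivalence.
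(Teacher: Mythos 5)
Your proof is correct and follows exactly the standard \ac{SLS} argument of \cite[Theorem~2.1]{Anderson2019}, which the paper's own proof simply cites without reproducing: nilpotency of the strictly block-lower-triangular operators gives invertibility of $\mathcal{I}-\mathcal{Z}\tilde{\mathcal{A}}-\mathcal{Z}\tilde{\mathcal{B}}\tilde{\mathcal{K}}$ in one direction, and the identity diagonal blocks of $\tilde{\Phi}_\mathrm{x}$ forced by~\eqref{eq:affine_map} give invertibility of $\tilde{\Phi}_\mathrm{x}$ in the other. Both directions, including the membership of the system responses and of $\tilde{\mathcal{K}}$ in the appropriate $\mathcal{L}$ classes, are handled correctly.
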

\begin{proof}
    The proof follows directly from~\cite[Theorem 2.1]{Anderson2019} for systems with zero initial conditions.
\end{proof}
\begin{remark}
We note that the inverse of $\tilde \Phi_\mathrm{x}$ never has to be computed explicitly, not even for the implementation of the error feedback $\tilde{\mathcal{K}}$~\cite{Anderson2019}.
\end{remark}
Proposition~\ref{prop:1} allows to compute the gains $\tilde{\mathcal{K}}$ using linear parametrization, for any \ac{LTV} system and in particular for~\eqref{eq:LTV_based_system}, where the matrices ${\mathcal{A}}$ and ${\mathcal{B}}$ depend on the nominal trajectory $(z,v)$.
Next, we show how the parameterization~\eqref{eq:affine_map} can be utilized to exactly solve Problem~\eqref{eq:prob_form} with linear feedback $\mathcal{K}$ for a given nominal trajectory $(z,v)$ and LTV error dynamics~\eqref{eq:sys_dyn}. To this end, we consider a causal linear error feedback of the form
\begin{equation}
    \tilde \U = \tilde{\mathcal{K}}\tilde \X, ~ \tilde u_0 =\tilde  v_0,~\tilde{\mathcal{K}}\in\mathcal{L}^{T,n_{\mathrm{u}}\times n_{\mathrm{x}}},
    \label{eq:affine_contr}
\end{equation}
and apply Proposition~\ref{prop:1} to characterize the system response of the \ac{LTV} error system~\eqref{eq:sys_dyn}.
The closed-loop error on the states and inputs is expressed using the definition of $\tilde{\Phi}_\mathrm{x}$ and $\tilde{\Phi}_\mathrm{u}$ in~\eqref{eq:sys_rep} for the \ac{LTV} error system~\eqref{eq:sys_dyn}, i.e., 
\begin{equation}
    \sv{\tilde x_k}{\tilde u_k} =  \sum_{j=0}^{k-1} \tilde{\Phi} ^{k-1,j}\tilde w_{k-1-j}\K,
    \label{eq:decomp}
\end{equation}
where $\tilde \Phi^{k,j} \defmath  \sv{\tilde{\Phi}_\mathrm{x}^{k,j}}{\tilde{\Phi}_\mathrm{u}^{k,j}}$, $\tilde{\Phi}_\mathrm{x}\in \mathcal{L}^{T,n_\mathrm{x}\times n_\mathrm{x}}$ and $\tilde{\Phi}_\mathrm{u}\in \mathcal{L}^{T,n_\mathrm{u}\times n_\mathrm{x}}$. Given Proposition~\ref{prop:1}, we can provide tight, i.e., necessary and sufficient, conditions for robust state and input constraint satisfaction for the \ac{LTV} system.
\begin{proposition}
\label{prop:lin}

There exists a causal error feedback of the form~\eqref{eq:affine_contr} such that for any $\tilde{\W}\in \tilde{\mathbb{W}}^T$
\begin{equation}
    c_i\T\sv{x_k}{  u_k}  + b_i \le 0\K\I,
 \label{eq:lin_const}
\end{equation}
with $\sv{ x_k}{ u_k} = \sv{ \tilde x_k}{ \tilde u_k}+ \sv{ z_k}{  v_k}$ according to~\eqref{eq:sys_dyn}, if and only if there exist matrices $\tilde{\Phi}_\mathrm{x}$, $\tilde{\Phi}_\mathrm{u}$ satisfying~\eqref{eq:affine_map}, and$\K\I$,
\begin{equation}
\begin{aligned}
&c_i\T \sv{\tilde z_k}{\tilde v_k}  + b_i+\sum_{j=0}^{k-1}  \| c_i\T \tilde{\Phi} ^{k-1,j} \tilde E_{k-1-j} \|_1 \le 0 .
    \end{aligned}
    \label{eq:nom_noise}
\end{equation}
\end{proposition}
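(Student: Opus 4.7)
The plan is to combine the system-level parametrization from Proposition~\ref{prop:1} with an exact (tight) worst-case evaluation of the affine constraints over the box-bounded disturbance, so that both the equivalence of feedback/system-response representations and the robust constraint reformulation can be shown in a single if-and-only-if argument.

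First I would invoke Proposition~\ref{prop:1} to replace the existential quantifier over causal linear feedbacks $\tilde{\mathcal{K}}\in\mathcal{L}^{T,n_\mathrm{u}\times n_\mathrm{x}}$ by an existential quantifier over pairs $(\tilde{\Phi}_\mathrm{x},\tilde{\Phi}_\mathrm{u})\in\mathcal{L}^{T,n_\mathrm{x}\times n_\mathrm{x}}\times\mathcal{L}^{T,n_\mathrm{u}\times n_\mathrm{x}}$ satisfying the affine constraint~\eqref{eq:affine_map}. Under this parametrization, the closed-loop error at time $k$ is given explicitly by~\eqref{eq:decomp}, so that, using $\sv{x_k}{u_k}=\sv{\tilde x_k}{\tilde u_k}+\sv{z_k}{v_k}$, the $i$-th constraint in~\eqref{eq:lin_const} is equivalent to
\begin{equation*}
c_i\T \sv{z_k}{v_k}+b_i+\sum_{j=0}^{k-1} c_i\T \tilde{\Phi}^{k-1,j}\,\tilde w_{k-1-j}\le 0.
\end{equation*}

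Next, I would handle the robust ``for all $\tilde{\W}\in\tilde{\mathbb{W}}^T$'' quantifier by taking the supremum of the sum above over the product set $\tilde{\mathbb{W}}^T$. Since each $\tilde w_{k-1-j}$ ranges independently in $\tilde E_{k-1-j}\B^{n_{\tilde{\mathrm{w}}}}$, the supremum splits across the summation index $j$. For each term, the support-function computation
\begin{equation*}
\sup_{d\in\B^{n_{\tilde{\mathrm{w}}}}} c_i\T \tilde{\Phi}^{k-1,j}\tilde E_{k-1-j}\,d=\bigl\|c_i\T \tilde{\Phi}^{k-1,j}\tilde E_{k-1-j}\bigr\|_1
\end{equation*}
follows from the standard duality between the $\infty$-norm ball and the $1$-norm. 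Substituting these tight worst-case values into the constraint yields exactly~\eqref{eq:nom_noise}.

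Finally, combining both steps gives the biconditional: sufficiency follows because the worst case bounds every realization, and necessity because the worst case is attained by some $\tilde{\W}\in\tilde{\mathbb{W}}^T$ (the box support function is achieved at a vertex), so the robust inequality forces~\eqref{eq:nom_noise}. The main subtle point, rather than a technical obstacle, is justifying \emph{tightness} of the reformulation, namely that the product structure of $\tilde{\mathbb{W}}^T$ allows the supremum to separate across time indices and that the support function is attained, so~\eqref{eq:nom_noise} is not only sufficient but also necessary; once this is noted, the remaining steps are the direct substitution from~\eqref{eq:decomp} and the invocation of Proposition~\ref{prop:1}, both of which are routine.
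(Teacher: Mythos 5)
Your proposal is correct and follows essentially the same route as the paper: invoke Proposition~\ref{prop:1} to pass from feedbacks to system responses, then evaluate the worst case over the product disturbance set exactly, splitting the maximum across time indices and using the duality between the $\infty$-norm ball and the $1$-norm (the paper cites \cite[Example 8]{Goulart2006OptimizationConstraints} for this step but writes out the same equality chain). Your explicit remark on attainment of the support function, which gives necessity, is implicit in the paper's chain of equalities.
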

\begin{proof}
As per Proposition~\ref{prop:1}, the \ac{LTV} error system can be written with Equation~\eqref{eq:decomp}, and we can apply directly~\cite[Example 8]{Goulart2006OptimizationConstraints}. Namely,$\K \I$, we have
\begin{IEEEeqnarray}{rCll}
    &&\max_{\tilde{\W}_{0:k-1} \in \tilde{\mathbb{W}}^{0:k-1}} c_i\T \sv{  x_k}{  u_k} +b_i \\
    &&\stackrel{\eqref{eq:decomp}}{=}  c_i\T \sv{ z_k}{ v_k} + \max_{\tilde{\W}_{0:k-1} \in \tilde{\mathbb{W}}^{0:k-1}} \sum_{j=0}^{k-1} c_i\T\tilde{\Phi} ^{k-1,j} \tilde w_{k-1-j} +b_i\nonumber\\
    &&= c_i\T \sv{ z_k}{ v_k} + \sum_{j=0}^{k-1} \max_{\tilde w_{j} \in \tilde{\mathbb{W}}_{j}}  c_i\T\tilde{\Phi} ^{k-1,j} \tilde w_{k-1-j} +b_i\nonumber\\    &&\stackrel{\eqref{eq:noise_set_lin}}{=} c_i\T \sv{ z_k}{ v_k} + \sum_{j=0}^{k-1} \left\|c_i\T\tilde{\Phi} ^{k-1,j}\tilde E_{k-1-j}\right\|_1 +b_i.\nonumber& \qedhere
\end{IEEEeqnarray}
\end{proof}
\begin{remark}
A similar reformulation for ellipsoidal disturbances or more general polytopic disturbances can be found in~\cite[Example 7-8]{Goulart2006OptimizationConstraints}. Likewise, the result can be naturally extended to time-varying constraints.
\end{remark}
We have presented conditions on a linear error feedback to achieve guaranteed constraint satisfaction assuming the error dynamics are given by an \ac{LTV} system using the linear error feedback parameterization~\eqref{eq:affine_map}, a description of the disturbance set $\tilde{\mathbb{W}}$~\eqref{eq:noise_set_lin}, and a nominal trajectory $( z,  v)$. 
In combination, Proposition~\ref{prop:1} characterizes the system response of the error dynamics~\eqref{eq:decomp}, while Proposition~\ref{prop:lin} provides tight bounds on the nominal trajectory and system response to ensure robust constraint satisfaction.
By combining these two results, we can solve the robust optimal control problem~\eqref{eq:prob_form} for \ac{LTV} error dynamics~\eqref{eq:sys_dyn}, error feedback~\eqref{eq:affine_contr}, and disturbances of the form~\eqref{eq:noise_set_lin} using the following optimization problem:
\begin{IEEEeqnarray}{rCl}
\IEEEyesnumber\label{eq:sls_lin}
\IEEEyessubnumber*
    \min_{ \tilde{\Phi}} \quad &&  J_T(\tilde{\Phi}),\\
    \text{s.t.}\quad  && \left[ \mathcal{I} - \mathcal{Z}\tilde{\mathcal{A}}\quad -\mathcal{Z}\tilde{\mathcal{B}}\right] \tilde{\Phi}  = \mathcal{I},\label{eq:sls_lin_slp}\\
    && \sum_{j=0}^{k-1}  \| c_i\T \tilde{\Phi} ^{k-1,j} \tilde E_{k-1-j} \|_1 \\
    &&~+c_i\T \sv{ z_k}{ v_k} + b_i  \le 0 \K\I,\nonumber
\end{IEEEeqnarray}
where $\tilde{\Phi}\defmath (\tilde{\Phi}_\mathrm{x},\tilde{\Phi}_\mathrm{u})$. Specifically, the solution of~\eqref{eq:sls_lin} provides an optimized error feedback $\tilde{\mathcal{K}} = \tilde{\Phi}_\mathrm{u} \tilde{\Phi}_\mathrm{x}^{-1}$ which guarantees robust satisfaction of the constraints for the closed-loop state and input trajectories. Problem~\eqref{eq:sls_lin} is a reformulation of established results in the literature (see, e.g.,~\cite{Anderson2019, Sieber2021AControl,wang2019system}), optimizing a linear error feedback controller for tight robust constraint satisfaction. 

In the following, we extend the linear formulation~\eqref{eq:sls_lin} to 
address the nonlinear robust optimal control problem~\eqref{eq:prob_form}: we jointly optimize a nominal nonlinear trajectory~\eqref{eq:nominal_trajectory} and utilize the \ac{LTV} dynamics~\eqref{eq:LTV_based_system} parameterized by the nominal nonlinear dynamics.
Besides, we account for the unknown error $\|e\|_\infty$ in~\eqref{eq:eta_kdef}, leveraging \ac{SLS} to jointly optimize a dynamic over-bound. In particular, we do not rely on an offline over-bound of $\|e\|_\infty$, as it would lead to an overly conservative formulation, as showcased in Section~\ref{subsubs:linear_comp}.
\subsection{Robust nonlinear finite-horizon optimal control problem}
\label{sec:result}
In this section, given the parameterization of the affine error feedback for the \ac{LTV} system (Section~\ref{sec:SLS}) and the linearization error of the nonlinear system (Section~\ref{sec:linerror}), we are in a position to introduce the main result of this paper (cf. Fig.~\ref{fig:outline}).  
In particular, we will show in Theorem~\ref{thm1} that the following \ac{NLP} provides a feasible solution to the robust optimal control problem in~\eqref{eq:prob_form}:
\begin{IEEEeqnarray}{rCl}
\IEEEyesnumber\label{eq:full_NLP}
\IEEEyessubnumber*
    \min_{\Z, v_0,\V,{\Phi} ,\D} &&J_T(\xz, \Z,\V,\Phi),\label{eq:NLP}\\
    \text{s.t. }  &&     \left[ \mathcal{I} - \mathcal{Z}\mathcal{A}(\Z,\V)\quad - \mathcal{Z}\mathcal{B}(\Z,\V)  \right]
    \Phi= \mathcal{I},\label{eq:nlp1}\\
    && z_{k+1} = f(z_k, v_k)\Km,z_0 = \xz,\label{eq:nlp2}\\
        &&\sum_{j=0}^{k-1}\| c_i\T \Phi ^{k-1,j} [E,\tube_{k-1-j}^2 \mu ] \|_1\label{eq:rob_lin_constraints_a} \\
    && ~+  c_i \T \zv + b_i \le 0\K\I,\nonumber\\
    && \sum_{j=0}^{k-1}\| \Phi ^{k-1,j}[E,\tube_{k-1-j}^2 \mu ]\|_{\infty} \le \tube_k\Km,\label{eq:rob_lin_constraints_b}
\end{IEEEeqnarray}
where we denote a feasible solution as $\{\Z^\feas, v_0^\feas, \V^\feas, \Phi^\feas,\bm{\tube}^\feas\}$, with $\Z \defmath  \left( z_1, \dots, z_T\right)\in \mathbb{R}^{Tn_\mathrm{x}}$, $\V \defmath  \left( v_1,\dots, v_T \right)\in \mathbb{R}^{Tn_\mathrm{u}}$, $\mathcal{A}(\Z,\V) \defmath\text{blkdiag}( A_1, \dots,  A_{T-1},0_{n_\mathrm{x},n_\mathrm{x}})\in\mathcal{L}^{T,n_\mathrm{x}\times n_\mathrm{x}}$, $\mathcal{B}(\Z,\V)\defmath  \text{blkdiag}( B_1, \dots,  B_{T-1},0_{n_\mathrm{x},n_\mathrm{u}})\in\mathcal{L}^{T,n_\mathrm{x}\times n_\mathrm{u}}$, $\D  = (\tube_0,\dots, \tube_{T-1})\in \mathbb{R}^{T}$,  ${\Phi}_\mathrm{x}\in \mathcal{L}^{T,n_\mathrm{x} \times n_\mathrm{x}}$, ${\Phi}_\mathrm{u}\in \mathcal{L}^{T,n_\mathrm{u} \times n_\mathrm{x}}$ and $\mu$ according to~\eqref{eq:def_mu}.
The nominal prediction is given by~\eqref{eq:nlp2}. Equation~\eqref{eq:nlp1} computes the system response for the linearization around the nominal trajectory $\Z$, $\V$ (cf. Proposition~\ref{prop:1}). The auxiliary variable $\tube_k$ is introduced to upper bound $\|e_k\|_\infty$, which is used to obtain a bound on the linearization error (cf. Proposition~\ref{prop:1}), that depends on all previous $\tube_j$, $j=0, \dots, k-1$, giving~\eqref{eq:rob_lin_constraints_b}. Using Proposition~\ref{prop:lin}, the constraints are tightened with respect to both the additive disturbance $w_k\in \mathbb{W}$ and the linearization error $ \|e_k\|_\infty \mu$ combined as in~\eqref{eq:eta_kdef}, i.e., the additive uncertainties on the \ac{LTV} error lie in the set $E\B^{n_\mathrm{w}} \oplus \|e \|_\infty^2\mu \B^{n_\mathrm{x}}$. As a result, the reachable set of the nonlinear system~\eqref{eq:nonlinear_dyn} at time step $k$, in closed-loop with the affine error feedback computed as in Theorem~\ref{thm1} satisfies
\begin{equation}
x_k\in z_k^\feas\bigoplus_{j=0}^{k-1} \Phi_\mathrm{x}^{\feas k-1,j}[E, {\tube^{\feas 2}_{k-1-j}}\mu] \B^{n_\mathrm{x} + n_\mathrm{w}}\mathdef \mathbb{D}_k \K.
\label{eq:tubes}
\end{equation}

The following theorem summarizes the properties of the proposed NLP~\eqref{eq:full_NLP}.
\begin{theorem}
\label{thm1}
Given Assumptions~\ref{assum:1},~\ref{assum:2} and~\ref{assum:3}, suppose the optimization problem~\eqref{eq:full_NLP} is feasible.
Then, the affine error feedback $\U = \V^\feas + \mathcal{K}^\feas(\X-\Z^\feas)$, $\mathcal{K}^\feas = \Phi_\mathrm{u}^\feas{\Phi_\mathrm{x}^\feas}^{-1}$, $u_0 = v_0^\feas$ provides a feasible solution to Problem~\eqref{eq:prob_form}, i.e.,
the closed-loop trajectories of system~\eqref{eq:nonlinear_dyn} under this error feedback robustly satisfy the constraints~\eqref{eq:prob_form_cons}.
\end{theorem}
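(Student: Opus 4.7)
The strategy is a single induction on the time index $k$ that jointly establishes, for every admissible disturbance realization $(w_0,\dots,w_{T-1})\in \mathbb{W}^T$: (i) the true closed-loop error $e_k = \sv{x_k - z_k^\feas}{u_k - v_k^\feas}$ admits the linear decomposition $e_k = \sum_{j=0}^{k-1}\Phi^{\feas k-1,j} d_{k-1-j}$ with $d_j\defmath w_j + r(x_j,u_j,z_j^\feas,v_j^\feas)$, (ii) each $d_j$ lies in $[E,\tube_j^{\feas 2}\mu]\B^{n_\mathrm{w}+n_\mathrm{x}}$, and (iii) $(x_k,u_k)\in\C$. The three propositions play complementary roles: Proposition~\ref{prop:1} delivers (i), Proposition~\ref{prop:lin} turns the NLP constraint~\eqref{eq:rob_lin_constraints_a} into (iii), and Proposition~\ref{prop:bound_remainder} converts an error-norm bound into the remainder bound needed for (ii).

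The setup step substitutes the Lagrange-form Taylor expansion~\eqref{eq:lin_hess} around $(z_k^\feas,v_k^\feas)$ into the true dynamics~\eqref{eq:nonlinear_dyn} under the proposed affine error feedback, rewriting the closed-loop error dynamics as the LTV system~\eqref{eq:LTV_based_system} with Jacobians $A_k=A(z_k^\feas,v_k^\feas)$, $B_k=B(z_k^\feas,v_k^\feas)$ and driving term $d_k$. Since $(\Phi_\mathrm{x}^\feas,\Phi_\mathrm{u}^\feas)$ satisfy~\eqref{eq:affine_map} with exactly these Jacobians (enforced by~\eqref{eq:nlp1}), Proposition~\ref{prop:1} immediately delivers (i) for every realization of $d_k$.

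The base case $k=0$ is immediate: $x_0=\xz=z_0^\feas$ and $u_0=v_0^\feas$ give $e_0=0$; the $k=0$ instance of~\eqref{eq:rob_lin_constraints_a} reduces to $(z_0^\feas,v_0^\feas)\in\C$; and the remainder vanishes, so $d_0=w_0\in E\B^{n_\mathrm{w}}\subseteq [E,\tube_0^{\feas 2}\mu]\B^{n_\mathrm{w}+n_\mathrm{x}}$. For the inductive step, assume (ii) up to index $k-1$. Substituting (i) into the left-hand side of~\eqref{eq:rob_lin_constraints_a} and maximizing each $d_j$ over its set reproduces the computation in the proof of Proposition~\ref{prop:lin} verbatim, yielding $c_i\T\sv{x_k}{u_k}+b_i\le 0$ for every $i$, i.e., (iii). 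Taking the $\infty$-norm of (i) and invoking~\eqref{eq:rob_lin_constraints_b} gives $\|e_k\|_\infty\le\tube_k^\feas$. Since $(x_k,u_k),(z_k^\feas,v_k^\feas)\in\C$, Proposition~\ref{prop:bound_remainder} then yields $|r_i(x_k,u_k,z_k^\feas,v_k^\feas)|\le\tube_k^{\feas 2}\mu_i$, whence $d_k\in[E,\tube_k^{\feas 2}\mu]\B^{n_\mathrm{w}+n_\mathrm{x}}$ and the induction closes.

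The main obstacle is the apparent circularity between the reachable-set over-bound $\tube_k^\feas$ and the Lagrange remainder bound: Proposition~\ref{prop:bound_remainder} requires $(x_k,u_k)\in\C$ before one can bound $r$ at step $k$, yet the very shrinkage of the predicted reachable set that yields this inclusion was derived from such a bound at previous time steps. Strict causality of the SLS decomposition resolves this, since $e_k$ depends only on $d_j$ with $j<k$: constraint satisfaction at step $k$ is established using only the inductive hypothesis on earlier steps, and the step-$k$ remainder bound is invoked only \emph{after} $(x_k,u_k)\in\C$ is in hand.
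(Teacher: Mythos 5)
Your proposal is correct and follows essentially the same route as the paper's proof: the Taylor decomposition into the LTV error system \eqref{eq:LTV_based_system}, the \ac{SLS} parameterization via Proposition~\ref{prop:1} and constraint \eqref{eq:nlp1}, and an induction that propagates $\|e_k\|_\infty\le\tube_k^\feas$ through \eqref{eq:rob_lin_constraints_b} and then reads off constraint satisfaction from \eqref{eq:rob_lin_constraints_a} as in Proposition~\ref{prop:lin}. The one refinement is that you fold $(x_k,u_k)\in\C$ into the induction invariant so that the hypothesis of Proposition~\ref{prop:bound_remainder} is verified before the step-$k$ remainder bound is used, whereas the paper asserts up front that $\d_k$ satisfies \eqref{eq:eta_kdef} and leaves this causality argument implicit.
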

\begin{proof}
First, the constraints~\eqref{eq:nlp2} ensure that the nominal trajectory satisfies the dynamics~\eqref{eq:nominal_trajectory}. 
Then, we use a Taylor series approximation with respect to the nominal trajectory~\eqref{eq:nlp2}, resulting in the \ac{LTV} error system~\eqref{eq:LTV_based_system}.
We apply Proposition~\ref{prop:1} to the \ac{LTV} error system~\eqref{eq:LTV_based_system} and the constraint~\eqref{eq:nlp1} implies that the closed-loop trajectories of the error system satisfy
\begin{equation}
    \begin{bmatrix}
    \X - \Z\\
    \U - \V
    \end{bmatrix} = 
    \begin{bmatrix}
    \Phi_\mathrm{x}\\\Phi_\mathrm{u}
    \end{bmatrix}
    \E,
\label{eq:cl_nl}
\end{equation}
with $\E \defmath (\d_0,\dots,\d_{T-1})\in \mathbb{R}^{Tn_\mathrm{x}}$ given by~\eqref{eq:eta_kdef}, $\X$, $\U$ satisfying~\eqref{eq:nonlinear_dyn} and $\Z$, $\V$ satisfying~\eqref{eq:nominal_trajectory}.
In the following, we show by induction that the auxiliary variables $\D\in \mathbb{R}^T$ satisfy 
\begin{equation}
    \| e_j \|_\infty \le \tube_j~\forall j\in \mathbb{N}_{T} .\label{eq:inclu}
\end{equation}
Inequality~\eqref{eq:inclu} holds for $j=0$ since $\|e_0\|_\infty=0$ (cf.~\eqref{eq:rob_lin_constraints_a}) and $ \tube_0\geq 0$ (cf.~\eqref{eq:rob_lin_constraints_b}).
Note that, as per Proposition~\ref{prop:bound_remainder}, the disturbance on the \ac{LTV} error system $\d_k$ satisfies~\eqref{eq:eta_kdef}.
Then, assuming Inequality~\eqref{eq:inclu} holds $ \forall j\in \mathbb{N}_{k-1} $, we have
\begin{equation}
    \d_j\in [E,\|e_j\|_\infty^2 \mu]\B^{n_\mathrm{x}+n_\mathrm{w}} \subseteq [E,\tube_j^2 \mu]\B^{n_\mathrm{x}+n_\mathrm{w}}~\forall j\in \mathbb{N}_{k-1} .
    \label{eq:new_E}
 \end{equation}
Hence, we obtain
\begin{IEEEeqnarray}{rll}
 \| \ek\|_\infty &\stackrel{\eqref{eq:cl_nl}}{=} \left\|\sum_{j=0}^{k-1}\Phi ^{k-1,j} \d_{k-1-j} \right\|_{\infty}&\label{eq:nonlin_noise}\\
 &\stackrel{\eqref{eq:new_E}}{\le} \sum_{j=0}^{k-1} \|  \Phi ^{k-1,j} [E,\tube_{k-1-j}^2 \mu ] \|_{\infty} &\stackrel{\eqref{eq:rob_lin_constraints_b}}{\le} \tube_k.\nonumber
\end{IEEEeqnarray}
Therefore, the constraints~\eqref{eq:rob_lin_constraints_b} ensure that Inequality~\eqref{eq:inclu} holds for any realization of the disturbance.
Finally, the constraint~\eqref{eq:rob_lin_constraints_a} in combination with Equation~\eqref{eq:new_E} ensures that the constraints are robustly satisfied, analogously to Proposition~\ref{prop:lin}, which can be seen by substituting $\tilde E_k$ for $[E,\tube_k^2 \mu]$ with $n_\mathrm{\tilde w} = n_\mathrm{x}+n_\mathrm{w}$.
\end{proof}
\begin{remark}
The optimization of the error (bound) dynamics, error feedback controller and nominal nonlinear trajectory are intricately intertwined in the optimization problem~\eqref{eq:full_NLP}.
In particular, we utilize the error dynamic
\begin{equation}
\label{eq:error_bounds_dynamics}
    e_k \in \bigoplus_{j=0}^{k-1} \Phi^{ k-1,j}[E, \|e_{k-1-j}\|_\infty^{ 2}\mu] \B^{n_\mathrm{x} + n_\mathrm{w}},
\end{equation}
to derive the constraint~\eqref{eq:rob_lin_constraints_b} which yields the jointly-optimized dynamical linearization error over-bound.
Although formulated differently from homothetic tube \ac{MPC}~\cite{Kohler2021ASystems, Rakovic2023}, this formulation also enables the error bound $\tau_k$ to grow and shrink.
\end{remark}

\begin{remark}
When the disturbance $w$ is set to zero with $E = 0_{n_\mathrm{x}, n_{{\mathrm{w}}}}$, we recover a nominal (non-robust) trajectory optimization problem (cf., e.g.,~\cite{MALYUTA2021282}), regardless of the value of worst-case curvature $\mu$, as the constraints~\eqref{eq:rob_lin_constraints_a} reduce to a nominal constraint, with $\D=0_T$, independent of $\Phi$. 
In case the system is linear ($\mu = 0_{n_{\mathrm{x}},n_{\mathrm{x}}}$), we recover the linear \ac{SLS} formulation (cf.~\cite{Sieber2021AControl}) since $\D$ does not enter the constraints~\eqref{eq:rob_lin_constraints_a}. In that case, ~\eqref{eq:error_bounds_dynamics} yields the exact reachable set under affine feedback.
\end{remark}

\begin{remark}
The considered handling of the nonlinear system is based on a linearization along an online optimized nominal trajectory and is comparable to~\cite{Villanueva2017RobustInequalities, Althoff2008ReachabilityLinearization, Messerer2021AnFeedback, Kim2022JointNonlinearities}, where the latter results also provide an affine feedback. 
However, the over-approximation of the reachable set in~\cite{Messerer2021AnFeedback} is based on the ellipsoidal propagation in~\cite{Houska2011RobustSystems}, where even the linear case is not tight. In contrast to~\cite{Cannon2011RobustControl}, both the linearized system $\mathcal{A}$, $\mathcal{B}$ and the bound on the remainder term $\D$ are adjusted online based on the jointly optimized nominal trajectory $\Z$, $\V$, which avoids conservativeness.
\end{remark}

\FloatBarrier
\section{Case Study: Satellite Attitude Control}
\label{sec:case_study}
The following example demonstrates the benefits of the proposed approach where we optimize jointly the error feedback and the nominal trajectory for nonlinear systems, and the dynamic linearization error overbound with additive disturbances. 
We apply the proposed method for the constrained attitude control of a satellite~\cite{MALYUTA2021282}, using an efficient \ac{SQP} implementation based on inexact Jacobians\footnote{An open-source implementation is available at \url{https://gitlab.ethz.ch/ics/nonlinear-system-level-synthesis}, doi: \url{https://doi.org/10.3929/ethz-b-000682052}}.
\begin{figure*}
    \centering
    \includegraphics[width=\textwidth]{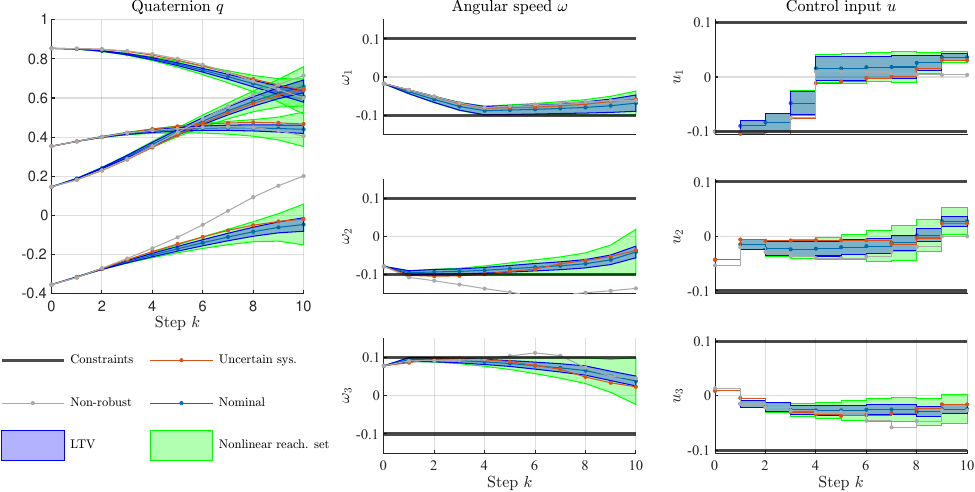}
    \caption{Robust nonlinear optimal control for the attitude control of a spacecraft computed with inexact \ac{SQP}. For each state and input, the reachable set (see Equation~\eqref{eq:tubes}) around the nominal trajectory depicts the online-computed reachable sets (green) and its \ac{LTV} approximation (blue), i.e., $\mu =0$. The reachable sets are designed to remain within the constraints (black), and the sample trajectory (red) is hence guaranteed to stay therein, unlike the nominal trajectory optimization design (grey).}
    \label{fig:att_spacecraft}
\end{figure*}

\subsection{System and constraints}
We consider the following Euler's equation of the dynamics
\begin{equation}
    \dot z = \begin{bmatrix}
    \Omega(\omega)q\\
    I_S^{-1}(v-\omega \times (I_S \omega))
    \end{bmatrix},
    \label{eq:nonlin_dyn_rot}
\end{equation}with states $z \defmath (q, \omega)$, attitude quaternion $q\in \mathbb{R}^4$, angular rotation rate $\omega\in \mathbb{R}^3$, input torque $v\in \mathbb{R}^{n_\mathrm{u}}$, $n_\mathrm{x} = 7$,  $n_\mathrm{u} =3$ and $\times$ is the cross product. The symmetric inertia matrix of the satellite is $I_S = \text{diag}(5,2,1)\in \mathbb{R}^{3\times 3}$ and
\begin{equation}
    \Omega(\omega) \defmath \frac{1}{2}\begin{bmatrix}
     0& -\omega_1 & -\omega_2 & -\omega_3\\
    \omega_1& 0 & \omega_3 & -\omega_2\\
        \omega_2& -\omega_3 & 0 & \omega_1\\
        \omega_3& \omega_2 & -\omega_1 & 0\\
    \end{bmatrix},
\end{equation}
with $\Omega : \mathbb{R}^3\mapsto \mathbb{R}^{4\times 4}$.
The dynamics are discretized using the $4^\text{th}$ order Runge–Kutta method, using a time step of one, 
which results in a negligible discretization error.
We consider a bounded disturbance applied to the system described by~\eqref{eq:noise_set} with $E =  5 \cdot 10^{-3} [ 0_{3, 4}~ \mathcal{I}_3]^\top \in \mathbb{R}^{7\times n_\mathrm{w}}$ with $n_\mathrm{w}=3$, which could stem, e.g., from the solar radiation pressure, the flexible modes of the solar panels, the aerodynamic drag, or any unmodelled dynamics.
Additionally, we consider the constraints $-0.1\le\omega_i\le 0.1$ and $-0.1 \le v_i\le 0.1$ for $i=1,2,3$. We use the nominal cost function
 $J_T(\xz, \Z,\V)=\sum_{k=0}^{T-1}\ell(z_k,v_k)+\ell_T(z_T)$, with
 stage cost $\ell(z,v) = (z - z_\text{ref})\T {Q}(z - z_\text{ref}) + (v - v_\text{ref})\T {R}(v - v_\text{ref})$,
and the terminal cost $\ell_T(z) = (z-z_\text{ref}) \T {Q} (z-z_\text{ref})$, with ${Q} = 0.7 \mathcal{I} \in \mathbb{R}^{7\times 7} $, ${R} = \mathcal{I}\in \mathbb{R}^{3 \times 3}$, the reference $z_\text{ref} = (1,0,0,0,0,0,0)\T$, $ v_\text{ref} = (0,0,0)\T$, and the horizon is $T = 10$.
As often seen in numerical optimization, we consider an additional term to the cost function, i.e., we use $J_T + \alpha \Y\T\Y$, with $\alpha = 10^{-2}$.
For simplicity, we approximate the constant $\mu \approx \text{diag}(
3.699  ,  3.703   , 3.717  ,  3.635 ,   0.649 ,   4.608   , 5.635) \in \mathbb{R}^{7 \times 7}$ from Equation~\eqref{eq:def_mu} for the nonlinear dynamics~\eqref{eq:nonlin_dyn_rot} by using a Monte-Carlo method with $10^4$ samples for a total offline computation time of 444 seconds.
The initial condition is $\xz = (\bar q, \bar \omega)$, where $\bar q$ is inferred from the Euler angles $(180, 45, 45)\cdot \frac{\pi}{180}$ and $\bar \omega = (-1, -4.5, 4.5)\cdot \frac{\pi}{180}$.

\subsection{Inexact \ac{SQP} implementation}
\label{sec:sqp}
To compute a solution to the optimization problem~\eqref{eq:full_NLP}, we employ an \ac{SQP} implementation, formulated with CasADi~\cite{Andersson2019}, which iteratively solves a \ac{QP} with Gurobi~\cite{gurobi}, locally approximating the \ac{NLP} at the previous iteration. For an efficient implementation, we neglect the Jacobians of the constraint~\eqref{eq:nlp1} with respect to $(\Z,\V)$, i.e., the employed inexact \ac{SQP} uses the following inexact linearization around $(\hat \Z, \hat \V)$,
\begin{IEEEeqnarray}{rCl}
            \left[ \mathcal{I} - \mathcal{Z}\mathcal{A}(\hat \Z, \hat \V)\quad \right. &&\left.- \mathcal{Z}\mathcal{B}(\hat \Z, \hat \V)  \right] \Phi= \mathcal{I}.
            \label{eq:NLP_trun}
\end{IEEEeqnarray}

As SQP approaches require twice differentiable constraints~\cite{Bock2007NumericalControl}, and the constraint \eqref{eq:NLP_trun}, or \eqref{eq:nlp1} contain the dynamics' Jacobian, this requires three times differentiable dynamics.
In addition, a Gauss-Newton approximation $H_J$ of the cost is used, 
with a small regularization term, i.e., we use $H_J + \gamma \mathcal{I}_{n_\mathrm{y}}$, with $\gamma=10^{-2}$. We assume convergence when the condition $\|(\Delta \Y,\Delta \bm{\nu})\|_\infty \le 10^{-6}$ is fulfilled, where $\Delta \bm{y}$ and $\Delta \bm{\nu}$ are respectively the primal and dual step between consecutive \ac{QP} steps~\cite{WrightStephenandNocedal1999NumericalOptimization}.A tailored solver was recently developed, exploiting the numerical structure of the QPs\cite{leeman2024fast}.

Under several mild assumptions, especially on the norm of the Jacobian being neglected, inexact \ac{SQP} converges to a feasible, but suboptimal, solution of the original \ac{NLP}~\eqref{eq:full_NLP}, while a standard implementation would converge to a locally optimal solution (see, e.g.,~\cite{Bock2007NumericalControl}) 
\subsection{Results}
\begin{figure}[ht!]
    \centering
    \includegraphics[width = 0.45\textwidth]{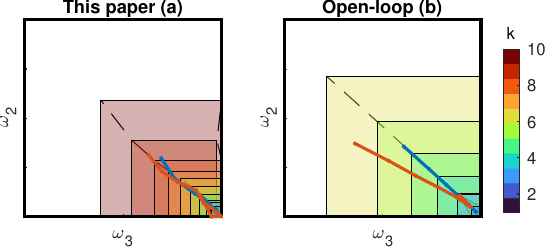}
    \caption{Comparison between the reachable set of the proposed robust nonlinear optimal control~\eqref{eq:full_NLP}~(a), and the (reduced-horizon) open-loop robust nonlinear optimal control (b), shown with an exemplary trajectory subject to disturbances (red).}
    \label{fig:pitch5}
\end{figure}

\subsubsection{Nonlinear system with affine error feedback}

For the problem considered, Fig.~\ref{fig:att_spacecraft} shows the solution of the NLP~\eqref{eq:full_NLP} computed with inexact \ac{SQP}, which ensures that the trajectories of the nonlinear system remain robustly within the constraints.
The shaded areas correspond to the reachable sets of the nonlinear system (Equation~\eqref{eq:tubes}) (green) and its \ac{LTV} approximation (blue), i.e., $\mu =0$. An illustrative disturbance sequence is applied and, as ensured by the proposed design, the resulting trajectory (red) remains within the reachable sets. The flexible error feedback parameterization allows the tubes to grow in some directions and shrink in others to meet the constraints, which illustrates the flexibility of this method. 
Moreover, Fig.~\ref{fig:att_spacecraft} compares our method with its nominal non-robust counterpart, which does not optimize error feedback ($\bm{\Phi}_\mathrm{x} = 0$, $\bm{\Phi}_\mathrm{u} = 0$) and neglects disturbances ($E = 0$).
The corresponding nominal non-robust (grey) trajectory with disturbances violates the constraints.
The problem was solved on an i9-7940X processor with 32GB of RAM memory, in 18 QP iterations, for a total of 4.45 seconds.
\subsubsection{Comparison with open-loop approach}
To highlight the benefits of the proposed method, we solve the NLP~\eqref{eq:full_NLP}, with $\Phi_\mathrm{u}=0_{T n_\mathrm{u},Tn_\mathrm{x}}$, resulting in an open-loop robust formulation, i.e., $\mathcal{K}=0_{T n_\mathrm{u},Tn_\mathrm{x}}$. For the open-loop case only, we consider a reduced horizon of $T=6$, as a longer horizon leads to infeasibilities. 
Indeed, this open-loop robust method does not apply error feedback, which makes the reachable set effectively larger and limits both the size of the disturbance and the horizon that can be considered. 
The open-loop robust formulation maintains the guarantees of robust constraint satisfaction, as the reachable set or tube always stays within the constraints as depicted in Fig.~\ref{fig:pitch5}(b). 
Because of the large size of the reachable set, the nominal trajectory is forced to move away from the constraints, leading to poor performance.
Therefore, the affine error feedback is instrumental to ensure robust constraint satisfaction for large disturbances without acting overly conservatively.

\subsubsection{Comparison with linear approaches}
\label{subsubs:linear_comp}
\begin{figure}
    \centering
    \includegraphics[width = 0.45\textwidth]{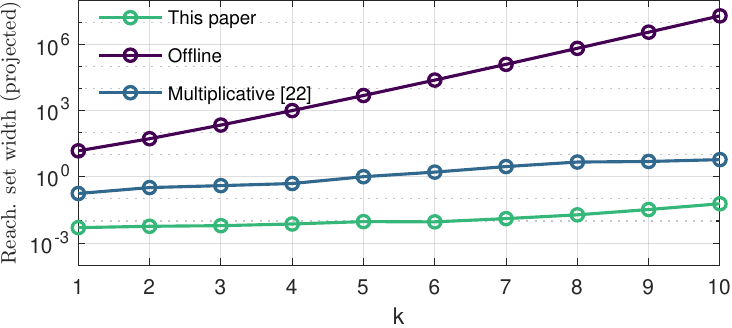}
    \caption{
    Width of the reachable set projected on $\omega_2$. Comparison between the proposed method, a linear \ac{SLS} method with offline-overbound, and a linear \ac{SLS} with multiplicative uncertainties \cite{Chen2021SystemApproximation}.}
    \label{fig:tubes_comp}
\end{figure}
We compare the performance of our method with existing linear \ac{SLS} alternatives, i.e., with similar joint trajectory and error feedback optimization, but linear nominal system and overbounding of the nonlinearity. First, for the sake of comparison with~\cite{Sieber2021AControl,wang2019system}, we consider the dynamics linearized at $(z_\text{ref}, v_\text{ref})$ and compute a polytopic disturbance bound accounting for the maximal linearization error w.r.t. $f(x,u)$ for all $(x,u)\in \mathbb{P}$.
Then, we also consider a multiplicative uncertainty formulation~\cite{Chen2021SystemApproximation} which requires $2^{n_\mathrm{x}} = 128$ 
inequality constraints for the overbound at each time step, {as the dynamics are bilinear} and we utilized a forward-Euler integrator.
Both linear methods yield an un-controllable linear system, and hence unbounded tube growth, which results in infeasible problems.

For comparison, Fig.~\ref{fig:tubes_comp} shows the width of the reachable set for the state $\omega_2$ for our method and both linear techniques, all three using the same feedback $\bm{\Phi}_\textrm{u}^\feas$ and nominal input $\V^\feas$. 
For this comparison, we utilized a forward-Euler integrator for simplicity.
The proposed formulation leads to order of magnitudes conservativeness reduction.

\section{Conclusion}
We proposed a novel approach to solve finite horizon constrained robust optimal control problems for nonlinear systems subject to additive disturbances, as e.g. commonly encountered in trajectory optimization or MPC. One of the main novelties lies in the joint optimization of a nonlinear nominal trajectory, an affine error feedback and dynamic over-bounds of the linearization error, ensuring robust constraint satisfaction. 
We showcased the method for the control of a rigid body in rotation with constraints on the states and inputs, and demonstrated reduced conservatism of our method compared to (nonlinear) open-loop robust and existing linear approaches jointly optimizing the nominal trajectory and error feedback. Considering a receding horizon implementation, as is typical in robust MPC, including a corresponding recursive feasibility and stability analysis, is an important direction for future work.
We recently extended the proposed method to handle parametric uncertainties\cite{leeman2023robust}.
\FloatBarrier
\bibliography{references}

\end{document}